\newtheorem{theorem}{Theorem}
\newtheorem{lemma}{Lemma}
\title{Saturation of multidimensional 0-1 matrices}
\author{Shen-Fu Tsai}
\begin{document}
\maketitle

\begin{abstract}
A 0-1 matrix $M$ is \textit{saturating} for a 0-1 matrix $P$ if $M$ does not contain a submatrix that can be turned into $P$ by flipping any number of its $1$-entries to $0$-entries, and changing any $0$-entry to $1$-entry of $M$ introduces a copy of $P$. Matrix $M$ is \textit{semisaturating} for $P$ if changing any $0$-entry to $1$-entry of $M$ introduces a \textit{new} copy of $P$, regardless of whether $M$ originally contains $P$ or not. The functions $ex(n;P)$ and $sat(n;P)$ are the maximum and minimum possible number of $1$-entries a $n\times n$ 0-1 matrix saturating for $P$ can have, respectively. Function $ssat(n;P)$ is the minimum possible number of $1$-entries a $n\times n$ 0-1 matrix semisaturating for $P$ can have.\\

Function $ex(n;P)$ has been studied for decades, while investigation on $sat(n;P)$ and $ssat(n;P)$ was initiated recently. In this paper, we make nontrivial generalization of results regarding these functions to multidimensional 0-1 matrices. In particular, we find the exact values of $ex(n;P,d)$ and $sat(n;P,d)$ when $P$ is a $d$-dimensional identity matrix. Then we give the necessary and sufficient condition for  a multidimensional 0-1 matrix to have bounded  semisaturation function.\\


 \noindent
 {\bf Keywords:} 0-1 matrix; forbidden pattern; excluded submatrix; multidimensional matrix; saturation\\
 {\bf 2020 Mathematics Subject Classification:} 05D99

\end{abstract}

\section{Introduction}
Extremal combinatorics on pattern avoidance is a central topic in graph theory and combinatorics. In this broad research area, the key question usually being asked is: how dense an object could be such that it \textit{avoids} a \textit{forbidden} or \textit{excluded} object. In this paper, the object of interest is \textit{multidimensional 0-1 matrix}. A matrix is called a 0-1 matrix if all its entries are either $0$ or $1$.
We say that a 0-1 matrix $A$ \textit{contains} another 0-1 matrix $P$ if $A$ has a submatrix that can be transformed to $P$ by changing any number of $1$-entries to $0$-entries. Otherwise, $A$ \textit{avoids} $P$. With this context, the density of concern is the number of $1$-entries of a 0-1 matrix, which is sometimes called its \textit{weight}. Following these, the key question can be formulated as seeking the asymptotic behavior of function $ex(n_1,n_2,\ldots,n_d;P)$, defined as the maximum weight of a $d$-dimensional $n_1\times n_2\times\ldots\times n_d$ 0-1 matrix that avoids another $d$-dimensional 0-1 matrix $P$. This problem can be seen as finding the maximum possible weight  of a $d$-dimensional $n_1\times n_2\times\ldots\times n_d$  0-1 matrix $A$ that is \textit{saturating} for 0-1 matrix $P$ i.e. $A$ does not contain $P$ and turning any $0$-entry of $A$ to $1$-entry introduces a copy of $P$ in $A$. It is then natural to also ask for the \textit{minimum} possible weight of a $d$-dimensional $n_1\times n_2\times\ldots n_d$ 0-1 matrix saturating for $P$, denoted $sat(n_1,n_2,\ldots,n_d;P)$. A variation of saturation is \textit{semisaturation}: $A$ is semisaturating for $P$ if flipping any $0$-entry of $A$ to $1$-entry creates a \textit{new} copy of $P$ in $A$. The corresponding extremal function is denoted $ssat(n_1,n_2,\ldots,n_d;P)$: the minimum possible weight of a $d$-dimensional $n_1\times n_2\times\ldots\times n_d$ 0-1 matrix semisaturating for $P$. When $n_1=n_2=\ldots=n_d$, we use the simplified notations $ex(n;P,d)$, $sat(n;P,d)$, and $ssat(n;P,d)$. By definition
$$
ssat(n_1,n_2,\ldots,n_d;P)\leq sat(n_1,n_2,\ldots,n_d;P)\leq ex(n_1,n_2,\ldots,n_d;P).
$$
In this paper we present two major results. First, for $P$ as a $d$-dimensional identity matrix we give the exact value of functions $ex(n_1,n_2,\ldots,n_d;P)$ and  $sat(n_1,n_2,\ldots,n_d;P)$, which are shown to be identical. This together with the implied structure of $d$-dimensional 0-1 matrices saturating for $P$ generalize the two-dimensional result by Brualdi and Cao \cite{BC2021} and Tsai's discovery that every maximal antichain in a strict chain product poset is also maximum \cite{Tsai2020}. Second, as a partial extension to Fulek and Keszegh's constant versus linear dichotomy for semisaturation function of two-dimensional 0-1 matrices  \cite{FK2021}, we give the necessary and sufficient condition for a $d$-dimensional 0-1 matrix to have bounded semisaturation function.\\

In Section~\ref{section:related-works} we review previous works relevant to our study. Section~\ref{section:notations} contains terminologies used throughout the paper. Our results and proofs are in Section~\ref{section:our-results}.

\section{Related works}\label{section:related-works}
The extremal theory of 0-1 matrix started around 1990 in studies of computational and discrete geometry problems. Mitchell presented an algorithm for finding the shortest $L_1$ path between two
points in a rectilinear grid with obstacles \cite{mitchell1992}. Its time complexity was bounded above via certain matrices' extremal function $ex(n;P)$ given by Bienstock and Gy\H{o}ri \cite{BG1991}. In 1959, Erd\H{o}s and Moser asked for the maximum number of unit distances among the vertices of a convex $n$-gon \cite{EM1959}. In 1990, F\"uredi gave the first upper bound $O(n\log_2 n)$ that is tighter than $n^{1+\epsilon}$ via the extremal theory \cite{furedi1990maximum}. Pach and Sharir used extremal functions $ex(n;P)$ to bound the number of pairs of non-intersecting and vertically visible line segments \cite{pach1991vertical}. One of the latest applications is the resolution of Stanley-Wilf Conjecture in enumerative combinatorics \cite{klazar2000,MT2003} in 2004 as Marcus and Tardos showed that every two-dimensional permutation matrix's extremal function $ex(n;P)$ is linear \cite{MT2003}.
After applying to geometry-related problems mentioned previously, F\"uredi and Hajnal \cite{FH1992} and Tardos \cite{Tardos2005} asymptotically decide the extremal functions $ex(n;P)$ for every 0-1 matrix with no more than four $1$-entries.\\

The extremal theory also extends from two-dimensional to multidimensional 0-1 matrices. Extending Marcus and Tardos' result above on two-dimensional permutation matrix \cite{MT2003}, Klazar and Marcus proved that the extremal function $ex(n;P)$ of a $d$-dimensional $k\times\ldots\times k$ permutation matrix is $O(n^{d-1})$ \cite{KM2007}. Geneson and Tian gave nontrivial bounds on the extremal function of \textit{block permutation matrix} i.e. Kronecker product of permutation matrix and \textit{block matrix} with no $0$-entry \cite{GT2017}, extending Geneson's result on two-dimensional tuple permutation matrix \cite{Geneson2009}. In another direction, they substantially improved the limit inferior and limit superior of the sequence $\frac{ex(n;P)}{n^{d-1}}$ for tuple permutation matrices and permutation matrices.\\

Recently, Brualdi and Cao initiated the study of saturation problem for two-dimensional 0-1 matrices \cite{BC2021}. They proved that every maximal matrix avoiding the identity matrix $I_k$ have the same weight. Fulek and Keszegh found a general upper bound on the saturation function in terms of the dimensions of $P$, and showed that the saturation function is either bounded or linear \cite{FK2021}. Geneson found that almost all permutation matrices have bounded saturation function \cite{Geneson2021}, followed by Berendsohn's full characterization of permutation matrices with bounded saturation function \cite{Berendsohn2021}.

\section{Notations}\label{section:notations}
For positive integer $d$, let $[d]$ denote $\{1,2,\ldots,d\}$. We denote a $d$-dimensional $n_1\times n_2\times\ldots\times n_d$ matrix by $A=(a_{x_1,\ldots,x_d})$, where $x_i\in[n_i]$ for each $i\in [d]$. 
A \textit{k-dimensional cross section} $L$ of a $d$-dimensional $n_1\times n_2\times\ldots\times n_d$ matrix $A$ is the set of all entries of $A$ whose coordinates of a fixed set $C_L$ of $d-k$ dimensions are fixed. A cross section $L$ of matrix $A$ is a \textit{face} if for every $i\in C_L$, the value of the $i^{\text{th}}$ coordinate is fixed to some $p_i\in\{1,n_i\}$.
An $i$-\textit{row} of matrix $A$ is a cross section $L$ with $C_L=[d]-\{i\}$.\\

Let $z$ and $o$ be $0$-entry and $1$-entry of 0-1 matrices $M$ and $P$, respectively. If flipping $z$ to $1$ introduces a new copy of $P$ in which the new $1$-entry matches $o$, then we say $z$ \textit{potentially matches} $o$.\\

Given a $d$-dimensional matrix $A$, entries $a_{x_1,\ldots,x_d}$ and $a_{y_1,\ldots,y_d}$ belong to the same \textit{diagonal} if $x_1-y_1=x_2-y_2=\ldots=x_d-y_d$. Diagonal is same as \textit{shape} in \cite{Tsai2020}. An $n_1\times n_2\times\ldots n_d$ matrix has $\prod_{i\in[d]}n_i-\prod_{i\in[d]}(n_i-1)$ diagonals. An entry $a_{x_1,\ldots,x_d}$ is \textit{below} or \textit{above} another entry $a_{y_1,\ldots,y_d}$ if $x_i>y_i$ for every $i\in[d]$ or $x_i<y_i$ for every $i\in[d]$, respectively. An entry $a_{x_1,\ldots,x_d}$ is \textit{semibelow} or \textit{semiabove} another entry $a_{y_1,\ldots,y_d}$ if $x_i\geq y_i$ for every $i\in[d]$ or $x_i\leq y_i$ for every $i\in[d]$, respectively. Two entries are \textit{comparable} if one of them is above the other. A $d$-dimensional \textit{staircase} generalized from \cite{FK2021} and \textit{zigzag path} in \cite{BC2021} is a set of pairwise incomparable entries in $A$. When the set is maximal, it is denoted \textit{complete} staircase. Note that staircase in \cite{FK2021} means complete staircase here.  From \cite{Tsai2020}, every complete staircase has size $\prod_{i\in[d]}n_i-\prod_{i\in[d]}(n_i-1)$ and intersects with every diagonal at exactly one entry. An entry $e_1$ is \textit{above} or \textit{below} a complete staircase $S$ if $S$ has an entry $e_2$ such that $e_1$ is above or below $e_2$, respectively. Every entry in $A$ is either above, below, or part of any complete staircase $S$ of $A$. A \textit{shell} is the unique complete staircase containing the corner entry $a_{n_1,\ldots,n_d}$. Similar to 0-1 matrices, the weight of a staircase is the number of $1$-entries it has.\\

\section{Our results}\label{section:our-results}
The following extension from Lemma 3.3 of \cite{FK2021} is handy in dealing with $d$-dimensional identity matrix.
\begin{lemma}\label{lemma:diagonal}
Let $P$ be a $d$-dimensional $l_1\times\ldots\times l_d$ 0-1 matrix with $l_i>1$ for each $i\in[d]$. The only $1$-entry in $P$'s shell is the corner $p_{l_1,\ldots,l_d}$. Then in any $d$-dimensional 0-1 matrix $M$ saturating for $P$ there is a $d$-dimensional complete staircase $S$ that is all $1$, and all entries below $S$ are $0$-entries that potentially match $p_{l_1,\ldots,l_d}$.
\end{lemma}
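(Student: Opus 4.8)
The plan is to let $S$ be the set obtained by choosing, on each diagonal $D$ of $M$, the $1$-entry of $D$ with the largest coordinates (the ``lowest'' $1$-entry of $D$), and then to show that this $S$ is well defined, is a complete staircase, and has the stated property. Write $c=p_{l_1,\ldots,l_d}$ for the corner of $P$, and call a $0$-entry $z$ of $M$ \emph{clear} if no $1$-entry of $M$ lies below $z$. Since $P$'s shell is exactly the set of $p_{a_1,\ldots,a_d}$ with $a_i=l_i$ for some $i$, the hypothesis says that every $1$-entry of $P$ other than $c$ has all coordinates strictly less than $l_i$, i.e.\ lies above $c$.

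The first ingredient I would establish is a structural statement about clear entries. If $z$ is a clear $0$-entry of $M$, then: (i) flipping $z$ to $1$ yields a copy of $P$ in which the new entry $z$ matches $c$ (hence $z$ potentially matches $c$), and moreover $z_i\geq l_i$ for all $i$; (ii) no $1$-entry of $M$ is semibelow $z$. Part (i) is quick: $M$ is saturating, so flipping $z$ creates a copy of $P$, and if $z$ matched some $1$-entry $p_a\neq c$, then $p_a$ is above $c$, so the entry of $M$ playing the role of $c$ would be a genuine $1$-entry of $M$ below $z$, contradicting clearness; and since $z$ then matches $c$ it equals the largest index used in dimension $i$, which is at least $l_i$. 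For part (ii), suppose $g=(g_1,\ldots,g_d)$ is a $1$-entry of $M$ with $g_i\geq z_i$ for all $i$ and $g\neq z$. Take the copy of $P$ from (i) and, in each dimension $i$, replace the largest index $z_i$ of the chosen index set by $g_i$. The result is still an order-preserving copy of $P$: its corner now sits at $g$ (a $1$-entry of $M$), while every other $1$-entry of $P$, being above $c$, maps to exactly the same entry as before, hence to a $1$-entry already present in $M$. So $M$ itself contains $P$, a contradiction.

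Granting this, I would finish in three steps. First, no diagonal $D$ is all $0$: otherwise, starting from the entry $z^{(0)}$ of $D$ with the largest coordinates (which has some coordinate equal to $n_i$, so nothing lies below it, hence it is clear) and walking up $D$, part (ii) forces each entry $z^{(t)}=z^{(0)}-(t,\ldots,t)$ of $D$ to be clear, so by (i) all its coordinates are $\geq l_i$; but the topmost entry of $D$ has some coordinate equal to $1<l_i$, a contradiction. Thus $S$ meets every diagonal. Second, $S$ is an antichain: otherwise choose $e,e'\in S$ with $e$ above $e'$ and with $\mu:=\min_i(e'_i-e_i)$ maximal; the entry $z=e+(\mu,\ldots,\mu)$ lies on the diagonal of $e$ and is a $0$-entry, and since $e'\neq z$ is semibelow $z$, part (ii) shows $z$ is not clear, so some $1$-entry lies below $z$; the lowest $1$-entry on its diagonal then lies in $S$, lies below $z$, and has each coordinate at least $e_i+\mu+1$, contradicting maximality of $\mu$. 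Since all entries on a diagonal are pairwise comparable, an antichain meeting every diagonal is maximal, so $S$ is a complete staircase (of the stated size $\prod_{i\in[d]}n_i-\prod_{i\in[d]}(n_i-1)$), and it is all $1$ by construction. Third, if $z$ lies below $S$, then using the ``above/below/on'' trichotomy for complete staircases one checks that $z$ must be a $0$-entry (a $1$-entry below $S$ would, via the lowest $1$-entry on its own diagonal, also lie below a staircase entry, making it both above and below $S$) and that no $1$-entry of $M$ lies below $z$, i.e.\ $z$ is clear; so by part (i) it potentially matches $c$.

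The step I expect to be the crux is part (ii) of the structural statement — the fact that a $P$-avoiding matrix cannot have a $1$-entry semibelow a clear $0$-entry — proved by ``sliding'' the corner of a freshly created copy of $P$ onto that $1$-entry while leaving the remaining $1$-entries fixed. Everything else (the walk-up argument for non-emptiness of diagonals, the extremal choice of $e,e'$ for the antichain property, and the trichotomy bookkeeping at the end) is comparatively routine once (ii) is available; one should also check the degenerate cases in which a diagonal is a single entry or $e$ is the lowest entry of its diagonal, but these are absorbed by the same arguments.
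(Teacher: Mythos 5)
Your proposal is correct and follows essentially the same route as the paper: you take $S$ to be the bottommost $1$-entry of each diagonal, prove non-emptiness of every diagonal by walking up from the bottom entry, and use as the key step the "slide the corner onto a $1$-entry semibelow a corner-matching $0$-entry" substitution (your part (ii)), which is exactly the paper's mechanism for both the antichain property and the claim about entries below $S$. Your write-up is more detailed — notably the explicit "clear entry" lemma and the extremal choice of $\mu$ where the paper simply uses the $0$-entry immediately below $o_1$ — but the underlying argument is the same.
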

\begin{proof}
We claim that the set of bottommost $1$-entries from all diagonals of $M$ forms a $d$-dimensional complete staircase.
Observe that if $z_1,z_2,\ldots,z_k$ are consecutive $0$-entries of a diagonal $D$ of $M$ where $z_1$ is the bottom entry of $D$, then they all potentially match $p_{l_1,\ldots,l_d}$. This implies every diagonal of $M$ contains at least an $1$-entry, because the top entry of any diagonal does not potentially match $p_{l_1,\ldots,l_d}$. We are left to prove that no bottommost $1$-entry $o_1$ of a diagonal $D_1$ is above the bottommost $1$-entry $o_2$ of any other diagonal $D_2$. If $o_1$ is above $o_2$, then let $z_1$ be the $0$-entry in $D_1$ immediately below $o_1$. Entry $z_1$ potentially matches $p_{l_1,\ldots,l_d}$ and is semiabove $o_2$, creating a copy of $P$ in $M$ where $o_2$ matches $p_{l_1,\ldots,l_d}$.
\end{proof}

Let $A$ and $B$ be $d$-dimensional $l_1\times l_2\times\ldots\times l_d$ and $k_1\times k_2\times\ldots\times k_d$ 0-1 matrices, respectively. The \textit{diagonal concatenation} $M$ of $A$ and $B$ is a $d$-dimensional $(l_1+k_1)\times (l_2+k_2)\times\ldots\times(l_d+k_d)$ 0-1 matrix, such that $m_{x_1,x_2,\ldots,x_d}=a_{x_1,x_2,\ldots,x_d}$ if $x_i\leq l_i$ for every $i\in[d]$, $m_{x_1,x_2,\ldots,x_d}=b_{x_1-l_1,x_2-l_2,\ldots,x_d-l_d}$ if $l_i+1\leq x_i$ for every $i\in[d]$, and $m_{x_1,\ldots,x_d}=0$ otherwise.\\

    The Lemma below generalizes Theorem 1.9 of \cite{FK2021}.
\begin{lemma}\label{lemma:add}
Let $A$ be a $d$-dimension $l_1\times l_2\times\ldots\times l_d$ 0-1 matrix, $I$ be a $d$-dimensional $1\times\ldots\times 1$ identity 0-1 matrix, $P'$ be the diagonal concatenation of $A$ and $I$, and $P$ be the diagonal concatenation of $P'$ and $I$. Then
\begin{align*}
&sat(n_1,n_2,\ldots,n_d;P)=sat(n_1-1,n_2-1,\ldots,n_d-1;P')+\prod_{i\in[d]}n_i-\prod_{i\in[d]}(n_i-1)\\
&ex(n_1,n_2,\ldots,n_d;P)=ex(n_1-1,n_2-1,\ldots,n_d-1;P')+\prod_{i\in[d]}n_i-\prod_{i\in[d]}(n_i-1)
\end{align*}
where $l_i+2\leq n_i$ for each $i\in[d]$.
\end{lemma}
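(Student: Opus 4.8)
The plan is to unwind the two displayed equalities into four inequalities: the bounds $\mathrm{sat}(n_1,\ldots,n_d;P)\le\mathrm{sat}(\cdots;P')+D$ and $\mathrm{ex}(n_1,\ldots,n_d;P)\ge\mathrm{ex}(\cdots;P')+D$ will follow from one explicit construction, while the two reverse bounds will be extracted from the structure forced by Lemma~\ref{lemma:diagonal}. Here and below I write $D=\prod_{i\in[d]}n_i-\prod_{i\in[d]}(n_i-1)$ for the common size of a complete staircase, $S_0=\{x:\exists i\in[d],\ x_i=n_i\}$ for the shell (so $|S_0|=D$), and $B$ for its complement $\{x:\forall i\in[d],\ x_i\le n_i-1\}$, the top-left box, which is order-isomorphic to the grid of an $(n_1-1)\times\cdots\times(n_d-1)$ matrix. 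First I would note that both $P'$ and $P$ satisfy the hypothesis of Lemma~\ref{lemma:diagonal}: since $P'$ is $A$ followed by one diagonal step and $P$ is $P'$ followed by one more, the shell of each contains exactly one $1$-entry, namely its corner.

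For the construction, take a matrix $M'$ on the $(n_1-1)\times\cdots\times(n_d-1)$ grid realizing $\mathrm{sat}(n_1-1,\ldots,n_d-1;P')$ (respectively $\mathrm{ex}$), place a copy of it on $B$, and set every entry of $S_0$ to $1$; this yields an $n_1\times\cdots\times n_d$ matrix $M$ with $w(M)=w(M')+D$. I would then verify two claims. \emph{If $M'$ avoids $P'$ then $M$ avoids $P$}: in any copy of $P$ in $M$, the image $e$ of the corner of the embedded $P'$-block must have a $1$-entry $f$ of $M$ strictly below it in every coordinate (the image of $P$'s corner); if $e\in S_0$ then some coordinate of $e$ equals $n_i$, leaving no room for $f$, so $e\in B$, which forces the whole $P'$-block into $B$ and contradicts that $M'$ avoids $P'$. \emph{If moreover $M'$ is saturating for $P'$ then $M$ is saturating for $P$}: every $0$-entry $z$ of $M$ lies in $B$, and flipping it creates, inside $B$, a copy of $P'$ using $z$; appending the corner $a_{n_1,\ldots,n_d}\in S_0$, a $1$-entry lying strictly below and to the ``right'' of all of $B$, completes this to a copy of $P$ using $z$ (superfluous $1$s of $M$ in the last face may be read as $0$s). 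These give $\mathrm{sat}(n_1,\ldots,n_d;P)\le\mathrm{sat}(n_1-1,\ldots,n_d-1;P')+D$ and $\mathrm{ex}(n_1,\ldots,n_d;P)\ge\mathrm{ex}(n_1-1,\ldots,n_d-1;P')+D$.

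For the reverse direction, let $M$ be saturating for $P$ (for the $\mathrm{ex}$ bound, take a maximum-weight $P$-avoiding $M$, which is automatically saturating). Lemma~\ref{lemma:diagonal} supplies an all-$1$ complete staircase $S$ with every entry strictly below $S$ equal to $0$, so $w(M)=|S|+w(M|_R)=D+w(M|_R)$, where $R$ is the region strictly above $S$. A copy of $P'$ lying inside $R$ would have corner $e$ strictly above $S$, so the unique $1$-entry of $S$ on the diagonal of $e$ sits strictly below the whole copy and completes it to a copy of $P$ in $M$, a contradiction; hence $M|_R$ avoids $P'$. For the $\mathrm{sat}$ bound I would strengthen this to local saturation: flipping any $0$-entry $z$ of $M$ that lies in $R$ creates a new copy of $P$ in $M$, and by chasing the comparabilities among its $P$-corner image, the corner of its $P'$-block, and the staircase $S$ (using once more that everything below $S$ is $0$), one checks that the $P'$-block of this copy lies entirely in $R$ and genuinely contains $z$. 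Thus $M|_R$, viewed as a configuration on $R$, avoids $P'$ yet acquires a new copy of $P'$ upon flipping any of its $0$-entries.

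The remaining step — and the one I expect to be the main obstacle — is to transfer $M|_R$ from the staircase-bounded region $R$ to the genuine $(n_1-1)\times\cdots\times(n_d-1)$ matrix on $B$, so that $w(M|_R)\le\mathrm{ex}(n_1-1,\ldots,n_d-1;P')$ and $w(M|_R)\ge\mathrm{sat}(n_1-1,\ldots,n_d-1;P')$. The intended route is a canonicalization of $M$: as long as $S\ne S_0$ some diagonal has its bottommost $1$ off the shell, so one ``shifts'' that diagonal's contents one step toward the corner. Such a shift leaves the weight unchanged (the entry pushed off the top was a $0$), affects no other diagonal, and moves that diagonal's bottommost $1$ strictly closer to the shell, so after finitely many shifts $S=S_0$ and $R=B$ exactly, whereupon the computation of the construction paragraph applies verbatim to $M$ itself. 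What must be checked, and is the technical heart of the argument, is that a single diagonal shift neither creates a copy of $P$ nor destroys the saturation property — i.e.\ that relocating one diagonal's $1$s cannot conspire with the neighbouring diagonals to spawn a forbidden copy or to delete a required one; this is exactly where the two-step identity tail of $P$, guaranteed by the hypothesis $l_i+2\le n_i$, makes the local modification safe.
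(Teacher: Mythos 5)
Your construction direction (shell filled with $1$s on top of an optimal $(n_1-1)\times\cdots\times(n_d-1)$ matrix placed in the box $B$) is correct and is exactly the paper's argument, and your use of Lemma~\ref{lemma:diagonal} to write $w(M)=D+w(M|_R)$ is also how the paper starts the reverse direction. The problem is the last step, which you yourself flag as unresolved: the transfer of $M|_R$ to an honest $(n_1-1)\times\cdots\times(n_d-1)$ matrix. Your proposed route --- repeatedly shifting diagonals until $S$ coincides with the shell, while claiming each shift preserves both $P$-avoidance and saturation --- is not carried out, and it is not needed. Since every entry strictly above an entry of $S$ has all coordinates at most $n_i-1$, the region $R$ already sits inside $B$; the paper simply defines $N'$ on $B$ by keeping the $1$-entries above $T$ and setting everything else (including the former staircase entries) to $0$, and then proves that this $N'$ is saturating for $P'$. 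No rearrangement of $M$ ever occurs.

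The substantive gap this leaves in your argument is the saturation lower bound. To conclude $w(M|_R)\geq sat(n_1-1,\ldots,n_d-1;P')$ you must exhibit a matrix on the full box $B$ of that weight which is saturating for $P'$, and saturation must be checked at \emph{every} $0$-entry of $B$ --- not only those in $R$. Your ``local saturation on $R$'' says nothing about the $0$-entries of $B$ that lie on or below the staircase (in particular the entries of $T$ itself, which become $0$ in $N'$). This is precisely where the paper does work: it shows these entries potentially match the corner of $P'$, using that in $N$ every entry below $T$ potentially matches the corner of $P$, so a copy of $P'$ (hence of $A$) sits strictly above it among $1$-entries that are above $T$ and therefore survive in $N'$. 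By contrast, the $ex$ half of the reverse direction needs no transfer at all: extend $M|_R$ by zeros to $B$ and it avoids $P'$, giving $w(M|_R)\leq ex(n_1-1,\ldots,n_d-1;P')$ immediately. As written, your proof establishes the two inequalities coming from the construction and the $ex$ upper bound, but the $sat$ lower bound rests on an unproved (and harder than necessary) canonicalization claim.
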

\begin{proof}
Given a $d$-dimensional $n_1\times n_2\times\ldots n_d$ 0-1 matrix $M$, let $S$ denote the shell of $M$ and let $M'$ denote the $(n_1-1)\times (n_2-1)\times\ldots (n_d-1)$ submatrix of $M$ that does not contain any entry of $S$. We show that if $S$ is all $1$ and $M'$ is saturating for $P'$, then $M$ is saturating for $P$. This implies $sat(n_1,n_2,\ldots,n_d;P)\leq sat(n_1-1,n_2-1,\ldots,n_d-1;P')+\prod_{i\in[d]}n_i-\prod_{i\in[d]}(n_i-1)$ and $ex(n_1,n_2,\ldots,n_d;P)\geq ex(n_1-1,n_2-1,\ldots,n_d-1;P')+\prod_{i\in[d]}n_i-\prod_{i\in[d]}(n_i-1)$. Indeed, $M$ clearly avoids $P$. All $0$-entries of $M$ lie in $M'$, so flipping any of them creates a copy $P''$ of $P'$ in $M'$. The copy $P''$ and $S$ filled with $1$-entries form a copy of $P$, so $M$ is saturating for $P$.\\

We are left to prove that $sat(n_1,n_2,\ldots,n_d;P)\geq sat(n_1-1,n_2-1,\ldots,n_d-1;P')+\prod_{i\in[d]}n_i-\prod_{i\in[d]}(n_i-1)$ and $ex(n_1,n_2,\ldots,n_d;P)\leq ex(n_1-1,n_2-1,\ldots,n_d-1;P')+\prod_{i\in[d]}n_i-\prod_{i\in[d]}(n_i-1)$. Given a matrix $N$ saturating for $P$, apply Lemma~\ref{lemma:diagonal} to get a $d$-dimensional complete staircase $T$ of $N$ that is all $1$ and all entries below $T$ are $0$-entries that potentially match $p_{l_1,\ldots,l_d}$. Obtain a $d$-dimensional matrix $N'$ by changing every $1$-entry of $T$ to $0$-entry followed by removing the shell of $N$. It suffices to show that $N'$ is saturating for $P'$.  Matrix $N'$ clearly avoids $P'$. For clarity denote by $T'$ the entries of $N'$ with the same coordinates as $T$, as $T$ intersects with the removed shell of $N$. Entries of $T'$ also form a $d$-dimensional complete staircase of $N'$. It could be seen that every $0$-entry of $N'$ that is below or part of $T'$ potentially matches the corner $p'_{l_1-1,\ldots,l_d-1}$ of $P'$. Moreover every $0$-entry of $N'$ above $T'$ potentially matches some $1$-entry of $P'$, because in $N$ together with $T$  turning any of them to $1$-entry introduces a copy of $P$.
\end{proof}

By successive application of Lemma~\ref{lemma:diagonal} and  Lemma~\ref{lemma:add} to $d$-dimensional $(k+1)\times(k+1)\times\ldots\times(k+1)$ identity matrix, we obtain the following generalization of Brualdi and Cao's result on 0-1 matrices saturating for identity matrices \cite{BC2021}. It also extends Tsai's discovery that every maximal antichain in a strict chain product poset is also maximum \cite{Tsai2020}, and implies that the greedy algorithm to generate 0-1 matrices saturating for identity matrix \cite{BC2021} also works for multidimensional matrices.
\begin{theorem}
Let $P$ be a $d$-dimensional $(k+1)\times(k+1)\times\ldots\times(k+1)$ identity matrix. Suppose $k\leq n_i$ for each $i\in[d]$ and $k\leq n$. Then
\begin{align*}
&sat(n_1,n_2,\ldots,n_d;P)=ex(n_1,n_2,\ldots,n_d;P)=\prod_{i\in[d]}n_i-\prod_{i\in[d]}(n_i-k)\\
&sat(n;P,d)=ex(n;P,d)=n^d-(n-k)^d=\Theta(n^{d-1}).
\end{align*}
Moreover, the $1$-entries of every $d$-dimensional $n_1\times n_2\times\ldots\times n_d$ 0-1 matrix saturating for $P$ can be decomposed into $k$ $d$-dimensional staircases with weights $\prod_{i\in[d]}n_i-\prod_{i\in[d]}(n_i-1)$, $\prod_{i\in[d]}(n_i-1)-\prod_{i\in[d]}(n_i-2)$, $\ldots$, $\prod_{i\in[d]}(n_i-k+1)-\prod_{i\in[d]}(n_i-k)$, respectively. 
\end{theorem}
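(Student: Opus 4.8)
The plan is to prove both halves of the theorem by induction on $k$, using Lemma~\ref{lemma:add} to shrink the matrix in all dimensions at once and Lemma~\ref{lemma:diagonal} to locate the top all-$1$ staircase of a saturating matrix. Write $I_m$ for the $d$-dimensional $m\times\cdots\times m$ identity matrix, so that $P=I_{k+1}$ and, for every $m$, $I_{m+1}$ is the diagonal concatenation of $I_m$ and the $1\times\cdots\times 1$ identity matrix. The base case $k=1$ I would handle directly: a 0-1 matrix avoids $I_2$ precisely when its $1$-entries are pairwise incomparable, so a matrix is saturating for $I_2$ precisely when its $1$-entries form a maximal set of pairwise incomparable entries, i.e. a complete staircase; since every complete staircase has size $\prod_{i\in[d]}n_i-\prod_{i\in[d]}(n_i-1)$ (as recalled in Section~\ref{section:notations}, \cite{Tsai2020}), this gives $ex=sat=\prod_{i\in[d]}n_i-\prod_{i\in[d]}(n_i-1)$ together with the trivial one-staircase decomposition.

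For the inductive step with $k\ge 2$ I would split on the hypothesis $k\le n_i$. If $n_i\ge k+1$ for every $i$, apply Lemma~\ref{lemma:add} with $A=I_{k-1}$, $P'=I_k$, $P=I_{k+1}$ — the side-length requirement being exactly $l_i+2=k+1\le n_i$ — and feed in the inductive hypothesis $sat(n_1-1,\ldots,n_d-1;I_k)=ex(n_1-1,\ldots,n_d-1;I_k)=\prod_{i\in[d]}(n_i-1)-\prod_{i\in[d]}(n_i-k)$; the two recurrences then telescope to the common value $\prod_{i\in[d]}n_i-\prod_{i\in[d]}(n_i-k)$. If instead $n_j=k$ for some $j$, then a copy of $I_{k+1}$ does not fit along coordinate $j$, so no matrix of these dimensions contains $I_{k+1}$; hence the only matrix saturating for $I_{k+1}$ is the all-$1$ matrix (flipping a $0$-entry can never create a copy), and its weight $\prod_{i\in[d]}n_i$ equals $\prod_{i\in[d]}n_i-\prod_{i\in[d]}(n_i-k)$ since $n_j-k=0$. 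Specializing to $n_1=\cdots=n_d=n$ gives $n^d-(n-k)^d$, and expanding $(n-k)^d=n^d-dk\,n^{d-1}+O(n^{d-2})$ shows this is $\Theta(n^{d-1})$ for fixed $d$ and $k$.

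For the staircase decomposition I would again induct on $k$, now using that, by the first part, every matrix $M$ saturating for $I_{k+1}$ has weight exactly $\prod_{i\in[d]}n_i-\prod_{i\in[d]}(n_i-k)$; the base case $k=1$ is the complete-staircase statement above. For $k\ge 2$ with all $n_i\ge k+1$, Lemma~\ref{lemma:diagonal} yields an all-$1$ complete staircase $T$ with every entry below $T$ equal to $0$, and the construction in the proof of Lemma~\ref{lemma:add} converts $M$, by zeroing $T$ and deleting the geometric shell $\{x:\exists i,\ x_i=n_i\}$, into a matrix $N'$ saturating for $I_k$ on dimensions $n_1-1,\ldots,n_d-1$. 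The crucial observation is that $T$ already contains every $1$-entry of $M$ lying on the shell: such an entry has some coordinate equal to $n_i$, so it cannot be strictly above any entry of $T$, and it is not below $T$ (entries below $T$ are $0$), hence it lies on $T$. Therefore the $1$-entries of $M$ are exactly $T$ together with the $1$-entries of $N'$, and the latter live in the subgrid $\{x:\forall i,\ x_i\le n_i-1\}$, an induced subposet of $M$, so any antichain of $N'$ is an antichain of $M$. Applying the inductive hypothesis to $N'$ decomposes its $1$-entries into $k-1$ staircases of weights $\prod_{i\in[d]}(n_i-1)-\prod_{i\in[d]}(n_i-2),\ldots,\prod_{i\in[d]}(n_i-k+1)-\prod_{i\in[d]}(n_i-k)$, and adjoining $T$, of weight $\prod_{i\in[d]}n_i-\prod_{i\in[d]}(n_i-1)$, gives the required decomposition. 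In the remaining case $n_j=k$, where $M$ is forced to be the all-$1$ matrix, I would peel off the shells of the nested subgrids $[n_1-m+1]\times\cdots\times[n_d-m+1]$ for $m=1,\ldots,k$: each is a complete staircase, their sizes are exactly the listed weights, and since $n_j-k=0$ they partition all $\prod_{i\in[d]}n_i$ entries.

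The main obstacle I anticipate is the bookkeeping in the decomposition step: verifying that $T$ and the recursively obtained staircases of $N'$ partition the $1$-entries of $M$ with no overlap and no leftover $1$-entry on the shell, and that they stay pairwise incomparable in $M$ rather than merely in the subgrid. Both points rest on the shell-containment observation above together with the fact, from the first part, that a saturating matrix has a determined weight, which forces the counts to agree; once these are secured, the boundary case $n_j=k$ and the asymptotic estimate are routine.
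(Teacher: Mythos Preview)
Your proposal is correct and follows essentially the same route as the paper, which proves the theorem in one sentence by ``successive application of Lemma~\ref{lemma:diagonal} and Lemma~\ref{lemma:add}''; you have simply unpacked that induction carefully, supplying the base case $k=1$, the boundary case $n_j=k$, and the bookkeeping that the $1$-entries of $M$ split as $T$ together with those of $N'$ (via the observation that every shell $1$-entry must lie on $T$). Your use of the construction inside the proof of Lemma~\ref{lemma:add}, rather than just its statement, to obtain that $N'$ is saturating for $I_k$ is exactly what the paper's ``successive application'' intends.
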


All remaining results are extended from Fulek and Keszegh's work on two-dimensional 0-1 matrices \cite{FK2021}. Note that the modified conditions in Lemma~\ref{lemma:only} and Theorem~\ref{theorem:bounded-ssat} are not trivial.

\begin{lemma}
For any $d$-dimensional 0-1 matrix $P$, $sat(n;P,d)=O(n^{d-1})$.
\end{lemma}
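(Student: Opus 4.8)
The plan is to prove the bound constructively: for each fixed $P$ I exhibit a single $d$-dimensional $n\times n\times\cdots\times n$ 0-1 matrix $M$ that is saturating for $P$ and has only $O(n^{d-1})$ one-entries, so that $sat(n;P,d)\le |M|$. Throughout I will use that reflecting the ambient cube along any subset of the axes is a weight-preserving bijection carrying $P$-free (respectively $P$-saturating) matrices to $P'$-free (respectively $P'$-saturating) matrices, where $P'$ is $P$ reflected along the same axes; hence $sat(n;P,d)=sat(n;P',d)$ and I may normalize $P$ by reflections. I may also assume $P$ has at least one $1$-entry, since otherwise no $P$-free matrix exists once $n\ge\max_i l_i$.

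The clean case is when some corner of $P$ is a $1$-entry; after reflections assume $p_{l_1,\ldots,l_d}=1$. Let $M$ be given by $m_{x_1,\ldots,x_d}=1$ iff $x_i\le l_i-1$ for some $i\in[d]$. Then $|M|\le\bigl(\sum_{i\in[d]}(l_i-1)\bigr)n^{d-1}=O(n^{d-1})$ by a union bound over the $d$ slabs. Matrix $M$ is $P$-free, because in any embedding of $P$ the $1$-entry $p_{l_1,\ldots,l_d}$ is sent to an entry of $M$ whose $i$-th coordinate is the largest of $l_i$ strictly increasing values and so is at least $l_i$ for every $i$, i.e.\ a $0$-entry. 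And $M$ is saturating: every $0$-entry of $M$ lies at a position $(z_1,\ldots,z_d)$ with $z_i\ge l_i$ for all $i$; embed $P$ by sending its $i$-th coordinate value $j<l_i$ to $j$ and value $l_i$ to $z_i$. Every $1$-entry of $P$ other than the corner has some coordinate strictly below $l_i$ and hence maps to a $1$-entry of $M$, while the corner maps to $(z_1,\ldots,z_d)$, which is $1$ after the flip; so flipping that $0$ creates a copy of $P$.

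To reduce the general case to the clean one I strip all-zero boundary layers from $P$. Suppose $P$ has an all-zero layer at the low end of direction $1$, i.e.\ $p_{1,x_2,\ldots,x_d}=0$ for all $x_2,\ldots,x_d$. In any matrix $N$ saturating for $P$ the face $x_1=1$ must be all $1$: a copy of $P$ never requires a $1$-entry in its lowest direction-$1$ layer, and copies of $P$ therefore depend only on layers $x_1\ge 2$ of $N$, so flipping a $0$ on the face $x_1=1$ creates no new copy. Deleting that face yields an $(n_1-1)\times n_2\times\cdots\times n_d$ matrix that is saturating for $P^-$, the pattern obtained from $P$ by removing its lowest direction-$1$ layer, and this correspondence is reversible; hence $sat(n_1,\ldots,n_d;P)=\prod_{j\ne 1}n_j+sat(n_1-1,n_2,\ldots,n_d;P^-)$, with the first summand $O(n^{d-1})$. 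Iterating over all $2d$ ends strips only finitely many layers and leaves a pattern $P^{\mathrm{core}}$ with no all-zero boundary layer, the accumulated extra terms totalling $O(n^{d-1})$. So it suffices to bound $sat$ for a pattern with no all-zero boundary layer, and if such a pattern has a corner equal to $1$ we are back in the clean case.

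The remaining case is a pattern $P$ with no all-zero boundary layer but with all $2^d$ corners equal to $0$ (for instance, in two dimensions, the \emph{plus} pattern). Following the two-dimensional construction of \cite{FK2021} I would take $M$ to be the union of (a) a constant-thickness frame of boundary layers, the thickness in direction $i$ dictated by how far into the cube the extreme $1$-entries of $P$ sit along axis $i$, and (b) a constant number of bounded-complexity \emph{staircase} gadgets placed in the interior. The frame is chosen so that the union is still $P$-free and already supplies the $1$-entries of $P$ that lie on extreme layers, while the gadgets are chosen so that flipping any remaining interior $0$-entry completes a copy of $P$, the gadget realizing the $1$-entries of $P$ strictly inside. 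Each piece has weight $O(n^{d-1})$ and there are $O(1)$ of them, so $|M|=O(n^{d-1})$. The main obstacle is exactly this last construction: verifying that finitely many gadgets of bounded individual size can be chosen to be simultaneously $P$-avoiding and to cover every interior $0$-entry. This is where the two-dimensional argument of \cite{FK2021} must be reworked, in the same spirit as the non-trivial modifications needed elsewhere in this section.
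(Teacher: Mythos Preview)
Your argument has a genuine gap: case~3 is only a sketch, and you explicitly concede that the ``main obstacle'' of building the interior gadgets is unresolved. As it stands, the proposal does not prove the lemma.

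The gap is, however, entirely self-inflicted. Your ``clean case'' construction already works for an \emph{arbitrary} $1$-entry of $P$, not only for a corner, and this is precisely the paper's (one-paragraph) proof. Fix any $1$-entry $p_{a_1,\ldots,a_d}$ of $P$ and set $m_{x_1,\ldots,x_d}=0$ iff $a_i\le x_i\le n-(l_i-a_i)$ for every $i\in[d]$; otherwise $m_{x_1,\ldots,x_d}=1$. Then $|M|=n^d-\prod_{i}(n-l_i+1)=O(n^{d-1})$. The matrix $M$ avoids $P$ because in any embedding the image of $p_{a_1,\ldots,a_d}$ has $i$-th coordinate equal to the $a_i$-th smallest and the $(l_i-a_i+1)$-th largest of the $l_i$ chosen values, hence lies in $[a_i,\,n-l_i+a_i]$ for every $i$ and is therefore a $0$-entry. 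For saturation, given a $0$-entry at $(z_1,\ldots,z_d)$ (so $a_i\le z_i\le n-l_i+a_i$), use in dimension $i$ the index set $\{1,\ldots,a_i-1,\ z_i,\ n-l_i+a_i+1,\ldots,n\}$; under this embedding $p_{a_1,\ldots,a_d}$ maps to $(z_1,\ldots,z_d)$, and any other $1$-entry $p_{y_1,\ldots,y_d}$ has some $y_i\ne a_i$, so its image has $i$-th coordinate $y_i<a_i$ or $n-l_i+y_i>n-l_i+a_i$ and lies in the all-$1$ frame. Thus the reflections, the layer-stripping in case~2, and the unfinished gadget construction in case~3 are all unnecessary.
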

\begin{proof}
If $P$ does not contain an $1$-entry, then $sat(n;P,d)=0$. Otherwise let the dimensions of $P$ be $l_1\times l_2\times\ldots\times l_d$, and let $a_{x_1,x_2,\ldots,x_d}$ be an $1$-entry.
Construct a $d$-dimensional $n\times n\times\ldots\times n$ 0-1 matrix $M$ saturating for $P$ as follows. For any entry $m_{x_1',x_2',\ldots,x_d'}$, set it to $0$ if and only if $x_i\leq x'_i\leq n-(l_i-x_i)$ for each $i\in[d]$. Clearly $M$ avoids $P$ and turning any $0$-entry of $M$ into $1$ makes it contain $P$. The weight of $M$ is $n^d-(n-l_1+1)(n-l_2+1)\ldots(n-l_d+1)=O(n^{d-1})$.
\end{proof}

\begin{lemma}
Let $P$ be the diagonal concatenation of non-zero $d$-dimensional 0-1 matrices $A$ and $B$. Then $sat(n;P,d)=\Theta(n^{d-1})$.
\end{lemma}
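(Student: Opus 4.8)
The upper bound $sat(n;P,d)=O(n^{d-1})$ is the preceding lemma, so the plan is to establish the matching lower bound; I will in fact show that every $d$-dimensional $n_1\times n_2\times\cdots\times n_d$ 0-1 matrix saturating for $P$ has at least $\prod_{i\in[d]}n_i-\prod_{i\in[d]}(n_i-1)$ $1$-entries --- one on each diagonal --- which for the cube equals $n^d-(n-1)^d=\Theta(n^{d-1})$.

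Let $M$ be a $d$-dimensional matrix saturating for $P$, where $P$ is the diagonal concatenation of non-zero matrices $A$ (occupying the ``$A$-block'' of coordinates of $P$) and $B$ (the ``$B$-block''). To each entry $z$ of $M$ I attach two attributes: put $z\in G_A$ if $M$ contains a copy of $A$ using, in every dimension, only indices strictly smaller than the corresponding coordinate of $z$; put $z\in G_B$ if $M$ contains a copy of $B$ using, in every dimension, only indices strictly larger than the corresponding coordinate of $z$. I would then prove two statements. First, in the spirit of Lemma~\ref{lemma:diagonal}, every $0$-entry $z$ of $M$ lies in $G_A\cup G_B$: since $M$ is saturating, $z$ potentially matches some $1$-entry $o$ of $P$ (the new $1$ cannot fall on a $0$-entry of $P$, since $M$ already avoids $P$), and $o$, being a $1$-entry, lies in the $A$-block or in the $B$-block; in the former case the non-zero $B$-block of $P$ is witnessed inside the newly created copy by $1$-entries of $M$ all of whose coordinates exceed those of $z$, so $z\in G_B$, and in the latter case the non-zero $A$-block gives $z\in G_A$ symmetrically. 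Second, $G_A\cap G_B=\emptyset$: an entry in the intersection would supply a copy of $A$ in $M$ all of whose indices, dimension by dimension, precede all indices of a copy of $B$ in $M$; concatenating the two index sets in each dimension exhibits a copy of the diagonal concatenation $P$ inside $M$, contradicting that $M$ avoids $P$.

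With these in hand I would argue one diagonal at a time. Fix a diagonal $D=(e_1,e_2,\ldots,e_m)$ of $M$, listed from its topmost entry $e_1$ to its bottommost entry $e_m$, so $e_{j+1}=e_j+(1,1,\ldots,1)$. From the definitions, a copy of $A$ strictly above $e_j$ is also strictly above every later $e_{j'}$, so $G_A\cap D=\{e_j:j\ge j_A\}$ is an upper segment of $D$; dually $G_B\cap D=\{e_j:j\le j_B\}$ is a lower segment. Moreover $e_1\notin G_A$ and $e_m\notin G_B$, since $e_1$ has a coordinate equal to $1$ and $e_m$ has a coordinate equal to some $n_i$, leaving no room above $e_1$ (resp.\ below $e_m$) for the required copy; thus $j_A\ge2$ and $j_B\le m-1$. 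Disjointness of $G_A$ and $G_B$ gives $j_B<j_A$, and in fact $j_A\ge j_B+2$: were $e_{j_B}\in G_B$ and $e_{j_B+1}\in G_A$, then because $e_{j_B+1}=e_{j_B}+(1,1,\ldots,1)$ the copy of $A$ strictly above $e_{j_B+1}$ would lie entirely above the copy of $B$ strictly below $e_{j_B}$, producing $P$ in $M$ again. Hence $e_{j_B+1}$ is a well-defined entry of $D$ belonging to neither $G_A$ nor $G_B$, and therefore, by the first statement, it is a $1$-entry of $M$. Selecting this entry on each of the $\prod_{i\in[d]}n_i-\prod_{i\in[d]}(n_i-1)$ diagonals yields that many distinct $1$-entries of $M$, which completes the lower bound and hence the proof.

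The substantive point, and the step I expect to demand the most care in a full write-up, is the pair of facts about $G_A$ and $G_B$: these are exactly where ``$M$ saturating'' and ``$M$ avoids $P$'' are used, and in particular the disjointness $G_A\cap G_B=\emptyset$ is what turns the per-diagonal count from a trivial $\Omega(1)$ into a guaranteed $1$-entry on every diagonal. Once $G_A$ and $G_B$ are correctly set up, the segment structure along each diagonal and the final counting are routine.
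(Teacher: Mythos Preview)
Your argument is correct, but it takes a genuinely different route from the paper's own proof. The paper argues along axis-parallel lines rather than diagonals: it shows that every $i$-row of a matrix $M$ saturating for $P$ must contain a $1$-entry, by observing that if an $i$-row $r$ were all $0$, then flipping its first entry forces the new $1$ to match a $1$-entry in the $A$-block of $P$, flipping its last entry forces a match in the $B$-block, and hence some adjacent pair $z_1,z_2$ along $r$ switches type; the $B$-part of the copy produced at $z_1$ together with the $A$-part of the copy produced at $z_2$ then already assemble a copy of $P$ inside $M$, a contradiction. This immediately gives at least $n^{d-1}$ ones.

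Your approach sets up the global sets $G_A$ and $G_B$ and runs the same intermediate-value idea along each diagonal instead. The paper's proof is shorter and avoids the segment bookkeeping, while yours yields the slightly sharper count $n^d-(n-1)^d$ (one $1$-entry per diagonal rather than per $i$-row) and meshes nicely with the staircase/diagonal machinery of Lemma~\ref{lemma:diagonal}; in particular your $G_A,G_B$ disjointness step is exactly the ``adjacent transition'' contradiction in the paper's argument, transported to the diagonal direction.
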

\begin{proof}
We show by contradiction that every $i$-row of a $d$-dimensional 0-1 matrix $M$ saturating for $P$ contains at least an $1$-entry where $i\in[d]$. Suppose $M$ has an $i$-row $r$ which is all $0$. Flipping the first or last entry of $r$ to $1$ creates a copy of $P$ where the new $1$-entry matches an $1$-entry of $A$ or $B$, respectively. Thus there exist two adjacent entries $z_1$ and $z_2$ of $r$ such that flipping $z_1$ to $1$ creates a copy $P'$ of $P$ where $z_1$  matches an $1$-entry of $A$, and flipping $z_2$ to $1$ creates a copy $P''$ of $P$ where $z_2$ matches an $1$-entry of $B$. The copy of $B$ in $P'$ and the copy of $A$ in $P''$ form a copy of $P$, i.e. $M$ contains $P$.
\end{proof}

\begin{lemma}\label{lemma:only}
Let $d'<d$. 
If a non-zero $d$-dimensional 0-1 matrix $P$ does not have an $1$-entry which is the only $1$-entry in every $d'$-dimensional cross section of $P$ it belongs to, then $ssat(n;P,d)=\Omega(n^{d-d'})$.
\end{lemma}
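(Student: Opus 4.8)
The plan is to prove a lower bound $W=\Omega(n^{d-d'})$ on the number $W$ of $1$-entries of an arbitrary $d$-dimensional $n\times n\times\cdots\times n$ 0-1 matrix $M$ semisaturating for $P$, via a local covering/counting argument. The first step is to unwind the hypothesis. Say $P$ has dimensions $l_1\times\cdots\times l_d$. An $1$-entry $o=p_{a_1,\ldots,a_d}$ of $P$ \emph{fails} to be the only $1$-entry of some $d'$-dimensional cross section of $P$ through it exactly when $P$ has another $1$-entry $o'=p_{b_1,\ldots,b_d}$ with $a_i=b_i$ for all $i$ in some set $C\subseteq[d]$ with $|C|=d-d'$; equivalently, $o$ and $o'$ agree in at least $d-d'$ coordinates. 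Thus the hypothesis says precisely: every $1$-entry of $P$ agrees with some other $1$-entry of $P$ in at least $d-d'$ coordinates (in particular $P$ has at least two $1$-entries).

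Next I would establish the key claim: in $M$, every $0$-entry agrees with some $1$-entry of $M$ in at least $d-d'$ coordinates. Let $z=m_{x_1,\ldots,x_d}$ be a $0$-entry. Since $M$ is semisaturating for $P$, flipping $z$ to $1$ creates a copy of $P$ not present before the flip. In a submatrix witnessing containment, every $1$-entry of $P$ sits on a $1$-entry of the host, so the flipped copy assigns to the $1$-entries of $P$ positions of $M$ each carrying a $1$ after the flip; at least one such position is $z$ itself, since otherwise those positions were already $1$ before the flip and the copy would not be new. Hence $z$ plays the role of some $1$-entry $o$ of $P$. Let $\phi_i\colon[l_i]\to[n]$ be the increasing injections defining this embedding, so $x_i=\phi_i(a_i)$ for all $i$. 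Choosing $o'=p_{b_1,\ldots,b_d}$ and $C$ for $o$ as in the first paragraph, the copy sends $o'$ to $w=m_{\phi_1(b_1),\ldots,\phi_d(b_d)}$, a $1$-entry of $M$ (distinct from $z$, as an embedding is injective on positions); and for $i\in C$ the $i$th coordinate of $w$ is $\phi_i(b_i)=\phi_i(a_i)=x_i$. So $z$ and $w$ agree in the $\ge d-d'$ coordinates indexed by $C$, which proves the claim.

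Finally, a counting argument closes the proof. A fixed $1$-entry $w$ of $M$ can be the ``witness'' of the claim only for positions of $M$ agreeing with $w$ in at least $d-d'$ coordinates, i.e.\ differing from $w$ in at most $d'$ coordinates, and there are at most $\sum_{j=0}^{d'}\binom{d}{j}(n-1)^{j}=O(n^{d'})$ of these. By the claim each of the $n^d-W$ zero-entries of $M$ is a witnessed position of some $1$-entry, so $n^d-W\le W\cdot O(n^{d'})$, which for large $n$ forces $W=\Omega(n^{d-d'})$; hence $ssat(n;P,d)=\Omega(n^{d-d'})$.

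I expect the main obstacle to be the careful justification inside the key claim, namely that a \emph{new} copy of $P$ must use the flipped entry $z$ as an \emph{$1$-entry} of $P$ (rather than merely as an entry lying where $P$ has a $0$), combined with the bookkeeping that a submatrix embedding preserves coordinate equalities, so the matched $1$-entry $w$ genuinely shares $d-d'$ coordinates with $z$; the concluding estimate is routine.
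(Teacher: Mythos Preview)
Your proof is correct and follows essentially the same approach as the paper: both establish that every $0$-entry of $M$ shares a $d'$-dimensional cross section with some $1$-entry (equivalently, agrees with it in at least $d-d'$ coordinates), and then finish by the same double-counting, the paper using the bound $\binom{d}{d'}(n^{d'}-1)$ where you use $\sum_{j\le d'}\binom{d}{j}(n-1)^j$. Your write-up is more explicit than the paper's in justifying why the flipped entry must correspond to a $1$-entry of $P$, but the underlying argument is identical.
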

\begin{proof}
Suppose $M$ is semisaturating for $P$. Say a $0$-entry and an $1$-entry of $M$ are \textit{connected} if they are in the same $d'$-dimensional cross section of $M$. Each $0$-entry is connected with at least an $1$-entry, and each $1$-entry is connected with at most $\binom{d}{d'}(n^{d'}-1)$ $0$-entries. So the weight of $M$ is at least 
$$
\frac{n^d}{1+\binom{d}{d'}(n^{d'}-1)}=\Theta(n^{d-d'}).
$$
\end{proof}

\begin{theorem}\label{theorem:bounded-ssat}
Given a $d$-dimensional pattern $P$, $ssat(n;P,d)=O(1)$ if and only all the following properties hold for $P$:\\
(i) For any $d'\in[d-1]$, every $d'$-dimensional face $f$ of $P$ contains an $1$-entry $o$ that is the only $1$-entry in every $(d-1)$-dimensional cross section that is orthogonal to $f$ and intersects $f$ at $o$.\\
(ii) $P$ contains an $1$-entry that is the only $1$-entry in every $(d-1)$-dimensional cross section it belongs to.
\end{theorem}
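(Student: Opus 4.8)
The plan is to prove the two implications separately. The ``only if'' direction is a counting bound in the style of Lemma~\ref{lemma:only}, carried out by contrapositive; the ``if'' direction is an explicit construction of a bounded-weight semisaturating matrix. Throughout, recall that flipping a $0$-entry $z$ produces a \emph{new} copy of $P$ precisely when some choice of submatrix through the position of $z$ realizes $P$ with $z$ matched to an $1$-entry of $P$.

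For the ``only if'' direction suppose $ssat(n;P,d)=O(1)$. If (ii) fails then Lemma~\ref{lemma:only} with $d'=d-1$ already gives $ssat(n;P,d)=\Omega(n)$, a contradiction, so (ii) holds. Now suppose (i) fails for some $d'$-dimensional face $f$ of $P$, $d'\in[d-1]$; write $T\subseteq[d]$ for the $|T|=d-d'$ coordinates fixed on $f$ and $D'=[d]\setminus T$ for the free ones. Let $M$ be $n\times\cdots\times n$ and semisaturating for $P$, and let $F$ be the $d'$-dimensional face of $M$ obtained by fixing the coordinates in $T$ to the same extreme values as on $f$. The key point is that for a $0$-entry $z$ of $F$, any new copy of $P$ created by flipping $z$ matches $z$ to an $1$-entry $o$ lying in $f$: for $i\in T$, if $z_i=1$ there is no room to place the $o_i-1$ smaller rows of the submatrix in coordinate $i$ unless $o_i=1$, and symmetrically when $z_i=n$. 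If $f$ has no $1$-entry, this forces every entry of $F$ to be an $1$-entry of $M$, so $\mathrm{weight}(M)\ge n^{d'}$. Otherwise, since $f$ violates (i), for each such $o$ there is a free coordinate $j$ and a second $1$-entry $o'\ne o$ of $P$ with $o'_j=o_j$; in the new copy $o'$ maps to a pre-existing $1$-entry $w$ of $M$ with $w_j=z_j$. A fixed $1$-entry $w$ arises this way from at most $d'\,n^{d'-1}$ entries $z$ of $F$ (pick the shared coordinate $j\in D'$, then the other $d'-1$ free coordinates of $z$), so $n^{d'}-\mathrm{weight}(M)\le \mathrm{weight}(M)\cdot d'\,n^{d'-1}$ and $\mathrm{weight}(M)=\Omega(n)$. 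In all cases $ssat(n;P,d)$ is unbounded, a contradiction, so (i) holds.

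For the ``if'' direction, assume (i) and (ii). Since $ssat(n;P,d)\le n^d$ for each fixed $n$, it suffices to construct, for all large $n$, a semisaturating matrix of weight bounded by a function of $P$ only. Put $C=\max_i l_i$, call a coordinate value \emph{extreme} if it lies in $[1,C]\cup[n-C,n]$ and \emph{middle} otherwise, and classify $z\in[n]^d$ by the set $T(z)$ of its extreme coordinates together with the low/high side on each. Prescribe the $1$-entries of $M$ as follows. When $T(z)=\emptyset$, use the $1$-entry $o^\ast$ from (ii): the $1$-entries of $P$ other than $o^\ast$ split, by comparison with $o^\ast$ coordinate by coordinate, into $2^d$ orthant pieces, which we place in the $2^d$ corner regions of $M$ (in coordinate $i$, rows strictly below $o^\ast$ go to the first rows of $M$, those strictly above to the last rows); then matching any middle $z$ to $o^\ast$ is a valid embedding whose non-$o^\ast$ images form one fixed bounded set. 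When $1\le|T(z)|\le d-1$, apply (i) to the $(d-|T(z)|)$-dimensional face $f=f_{T(z)}$ of $P$ to get an $1$-entry $o_f\in f$ that is the only $1$-entry of $P$ in every $(d-1)$-dimensional cross section orthogonal to $f$ through $o_f$; match $z$ to $o_f$, handling the free coordinates as in the previous case and, for each $i\in T(z)$, placing the rows of $P$ past $o_f$ in coordinate $i$ at the appropriate end of $M$ while the row of $o_f$ itself follows the (bounded-range) value $z_i$. When $|T(z)|=d$ — a region of at most $(2C+2)^d$ entries — if $z$ admits some submatrix matching it to an $1$-entry of $P$, add the images of the remaining $1$-entries of $P$ under a ``compressed'' such embedding, and otherwise add $z$ itself. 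Let $M$ have $1$-entries exactly at the union $G$ of everything prescribed. Every prescribed position has all coordinates within distance $2C$ of an end of $[n]$, so $|G|$ is bounded in terms of $P$; and by construction flipping any $z\notin G$ yields a copy of $P$ matching $z$ to an $1$-entry of $P$, i.e. a new copy, so $M$ is semisaturating.

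The main obstacle is the bookkeeping in the ``if'' direction: one must fix the row offsets so that, within each class, the prescribed submatrices are valid embeddings (strictly increasing in every coordinate) simultaneously for every value of every extreme coordinate of $z$; verify that the non-flipped $1$-entries of $P$ always land on prescribed $1$-entries of $M$ rather than clashing with one another; and check that the union of the gadgets over all classes and over all extreme coordinate values — only boundedly many translates of each gadget occur — really covers every $0$-entry. The ``only if'' direction is comparatively routine once the ``no room at an extreme coordinate'' observation is isolated, and that observation is also what singles out the conditions in the statement and in Lemma~\ref{lemma:only}.
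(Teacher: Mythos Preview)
Your ``only if'' direction is correct and matches the paper's argument essentially line for line: Lemma~\ref{lemma:only} handles the failure of (ii), and for the failure of (i) on a face $f$ you use exactly the paper's counting --- each $0$-entry of the corresponding face $F$ of $M$ forces a witness $1$-entry sharing a free coordinate, and a double count gives $\Omega(n)$.

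Your ``if'' direction is also correct in idea and rests on the same mechanism as the paper (use the entry from (ii) when $z$ is fully interior, and the entry from (i) on the appropriate face when some coordinates of $z$ are near the boundary), but the paper's execution is much cleaner and eliminates precisely the ``bookkeeping'' you flag as the main obstacle. Instead of building $M$ as a union of gadgets over all classes $T(z)$, the paper simply sets $m'_{l'_1,\ldots,l'_d}=1$ iff for every $i$ either $l'_i<l_i$ or $l'_i>n+1-l_i$; that is, $M'$ is $1$ exactly on the product of the two boundary strips in each coordinate. This is manifestly $O(1)$, and then a single explicit slicing --- for interior $z$ use rows $\{1,\ldots,o_i-1,\,z_i,\,n-l_i+o_i+1,\ldots,n\}$, and for boundary $z$ replace the free-coordinate rows by the analogous set centred at the (i)-entry of the relevant face while the fixed-coordinate rows run along the appropriate edge --- lands every non-flipped $1$-entry of $P$ on an all-extreme position of $M'$ automatically. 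The point you were worried about, that non-flipped ones land on prescribed ones, becomes a one-line check: property (i) guarantees that for each free coordinate $i$ no other $1$-entry of $P$ shares the $i$-row of $o_f$, so their images avoid the middle value $z_i$ and stay in the boundary strips. Your separate treatment of $|T(z)|=d$ via ``compressed embeddings'' is then unnecessary, since those $z$ are already $1$-entries of $M'$.
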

\begin{proof}
Let $M$ be a $d$-dimensional 0-1 matrix saturating for $P$. Suppose $P$ does not have property (i) i.e. for some $d'\in[d-1]$ $P$ has a $d'$-dimensional face $f$ that does not contain an $1$-entry $o$ that is the only $1$-entry in every $(d-1)$-dimensional cross section that is orthogonal to $f$ and intersects $f$ at $o$. Let the counterpart of $f$ in $M$ be $f'$. If $f$ is all $0$, then $f'$ must be all $1$. Otherwise, we say a $0$-entry in $f'$ is connected to an $1$-entry in $M-f'$ if they are in the same $(d-1)$-dimensional cross section. Each $0$-entry of $f'$ is connected to at least an $1$-entry. Each $1$-entry of $M-f'$ is in $d'$ $(d-1)$-dimensional cross sections that is orthogonal to $f'$, and each of them contains at most $n^{d'-1}$ $0$-entries of $f'$. Thus each $1$-entry of $M-f'$ is connected to at most $d'(n^{d'-1})$ $0$-entries in $f'$. Suppose $f'$ has $\alpha$ $0$-entries, then $M$ has at least $(n^{d'}-\alpha)+\frac{\alpha}{d'(n^{d'-1})}\geq \frac{n^{d'}}{d'(n^{d'-1})}=\Theta(n)$ $1$-entries. Otherwise suppose $P$ does not have property (ii) i.e. it does not contain an $1$-entry that is the only $1$-entry in every $(d-1)$-dimensional cross section it belongs to. By Lemma~\ref{lemma:only} $ssat(n;P,d)=\Omega(n)$.\\

We are left to prove that $ssat(n;P,d)=O(1)$ if $P$ has all the properties. Suppose $P$'s dimensions are $l_1\times l_2\times\ldots\times l_d$. We construct an $n\times n\times\ldots\times n$ 0-1 matrix $M'$ semisaturating for $P$ with $O(1)$ $1$-entries as follows. Entry $m'_{l_1',l_2',\ldots,l_d'}$ is $1$ if and only if for each $i\in[d]$, $l_i'<l_i$ or $n+1-l_i<l_i'$.
Let an $1$-entry of $P$ with property (ii) be $p_{o_1,o_2,\ldots,o_d}$.
For a given $0$-entry $m'_{z_1,z_2,\ldots,z_d}$, suppose turning it to $1$ produces another matrix $M''$. If $z_i\in[l_i, n+1-l_i]$ for each $i\in[d]$ then for each dimension $i$ we slice $M''$ by indices  $\{1,2,\ldots,o_i-1,z_i,n-l_i+o_i+1,n-l_i+o_i+2,\ldots,n\}$, and the resulting $l_1\times\ldots\times\l_d$ submatrix contains $P$.\\

If $z_i\notin [l_i,n+1-l_i]$ for some $i\in[d]$, there exists a partitioning $[d]=X\cup Y\cup Z$ such that $X\cup Y\ne \emptyset$ and $\forall i\in[d]$
$$
\begin{cases}
z_i<l_i, & \text{if }i\in X\\
n+1-l_i<z_i, & \text{if }i\in Y\\
z_i\in[l_i, n+1-l_i], &\text{otherwise}
\end{cases}.
$$
Let $f$ be the face of $P$ whose $i^{\text{th}}$ coordinate is fixed to $1$ or $l_i$ if $i\in X$ or $i\in Y$, respectively, and for each $i\in Z$ let the $i^{\text{th}}$ coordinate of the $1$-entry contained in $f$ with property (i) be $l_i''$. We then slice $M''$ by indices $I_i$ for each $i\in [d]$ to obtain a $l_1\times\ldots\l_d$ submatrix containing $P$: for each $i\in X$, $I_i=\{z_i,n-l_i+2,n-l_i+1,\ldots,n\}$; for each $i\in Y$, $I_i=\{1,2,\ldots,l_i-1,z_i\}$; otherwise $I_i=\{1,2,\ldots,l_{i}''-1, z_i, n-l_i+l_{i}''+1,\ldots,n\}$.
\end{proof}
\bibliographystyle{plain}
\bibliography{pattern-avoidance}

\end{document}